\newcommand \datum {January 15, 2022}
\theoremstyle{plain}
 \newtheorem{theorem}{Theorem}
 \newtheorem{lemma}{Lemma}
 \newtheorem{corollary}{Corollary}
\theoremstyle{definition}
\theoremstyle{remark}
\newcommand \RCSub[1] {\textup{RCSub}(#1)}
\newcommand \prcg[2] {\textup{rcg}_{#1}(#2)}
\newcommand \rcp[1]  {\textup{rc}_{#1}}
\newcommand \ideal [1] {\mathord{\downarrow}#1}
\newcommand \filter [1] {\mathord{\uparrow}#1}
\newcommand \lideal [2] {\mathord{\downarrow}_#1\kern 2pt #2}
\newcommand \lfilter [2] {\mathord{\uparrow}_#1\kern 2pt #2}
\newcommand \ucov[1] {#1^\bullet}
\newcommand \lcov[1] {#1_{\kern 0.5pt\pmb{\pmb\circ}}}
\newcommand \len [1]{\textup{len}(#1)}
\newcommand \llen[1] {\textup{len}_{\kern-0.1pt#1}}
\newcommand \At[1] {\textup{At}(#1)}
\newcommand \Coat[1] {\textup{Coat}(#1)}
\newcommand \Nplu {\mathbb N^+}
\newcommand \restrict [2] {#1\rceil_{\kern -1pt #2}}
\newcommand \set [1]{\{#1\}}
\newcommand \tbf[1]  {\textbf{#1}}
\long\def \nothing #1 {}
\newcommand\red[1]{{\textcolor{red}{#1}}}
\newcommand \mgreen [1] {{\color{green!30!magenta}#1\color{black}}}
\begin{document}
\title[RC-closed sublattices]
{2-distributivity and lattices of sublattices closed under taking relative complements}

\author[G.\ Cz\'edli]{G\'abor Cz\'edli}
\email{czedli@math.u-szeged.hu \qquad Address$:$~University of Szeged, Hungary}
\urladdr{http://www.math.u-szeged.hu/~czedli/}
\nothing{\address{ Bolyai Institute, University of Szeged, Hungary}}

\begin{abstract}
For a modular  lattice $L$ of finite length, we prove that the distributivity of $L$ is a sufficient condition while its 2-distributivity is a necessary condition that those  sublattices of $L$ that are closed under taking relative complements form a ranked lattice of finite length. 
\end{abstract}

\dedicatory{Dedicated to Professor B\'ela Cs\'ak\'any on his ninetieth birthday}

\thanks{This research was supported by the National Research, Development and Innovation Fund of Hungary under funding scheme K 134851.}

\subjclass {06C99, 06D99}

\keywords{Relative complement, distributive lattice, modular lattice, 2-distributive lattice, $n$-distributivity, sublattice, ranked lattice}

\date{\datum.\hfill{\qquad\red{{Check} the author's website (later MathSciNet or Zentralblatt) for updates and related papers}}}

\maketitle
\[\phantom{mmmmm}\red{
\boxed{\parbox{8.5cm}{\mgreen{Compared to the previous (January 6, 2022) arXiv version, now Theorem \ref{thmmaina}, Lemma \ref{lemma:rcgen}, and Corollary \ref{cor:nhhNskpS} are stronger since they do not assume the modularity of $L$.}}}}
\]

For elements $u,x,v$ of a lattice $L$, let 
$\rcp L(u,x,v):=\set{y\in L: x\wedge y=u\text{ and }x\vee y=v}
$. A sublattice $S$ of $L$ is \emph{closed with respect to taking relative complements} or, shortly saying, it is an \emph{RC-closed sublattice} if for all $u,x,v\in S$,  we have that  $\rcp L(u,x,v)\subseteq S$. The set consisting of the empty set and the RC-closed sublattices of $L$ will be denoted by $\RCSub L$. For any lattice $L$,  the poset $\RCSub L =\bigl(\RCSub L, \subseteq\bigr)$ is an algebraic lattice.
Following  Huhn \cite{huhn71,huhn72}, a lattice $L$ is \emph{$n$-distributive} if for all $x,y_0,\dots,y_n\in L$, $x\wedge\bigvee\set{y_i:0\leq i\leq n}=\bigvee_{j=0}^n\bigl( x\wedge\bigvee\set{y_i:0\leq i\leq n\text{ and }i\neq j}\bigr)$.   The \emph{length} $\len L$ of a lattice $L$ is the supremum of the lengths of its finite chains; for a finite chain $C$, $\len C=|C|-1$. A \emph{ranked lattice} is a lattice in which any two maximal chains of every principal ideal are of the same finite length.  Our aim is to prove the following four statements.

\begin{theorem}\label{thmmaina}
If $L$ is a lattice of finite length, then $\len{\RCSub L}=1+ \len L$.
\end{theorem}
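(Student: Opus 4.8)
The plan is to prove the two inequalities $\len{\RCSub L}\le 1+\len L$ and $\len{\RCSub L}\ge 1+\len L$ separately, writing $N:=\len L$. Throughout I use that finite length forces both chain conditions, so every nonempty subset of $L$ has minimal and maximal elements, and that, since $\RCSub L$ is algebraic, for every $X\subseteq L$ there is a smallest RC-closed sublattice $\langle X\rangle$ containing $X$ (intersections of RC-closed sublattices are again RC-closed). The whole argument should avoid modularity; the single nontrivial ingredient is a strict monotonicity lemma isolated below, everything else being bookkeeping with chains.

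For the lower bound I would fix a chain $C\colon c_0\prec c_1\prec\dots\prec c_N$ in $L$ of maximum length $N$ (its links are automatically coverings) and set $T_i:=\langle\set{c_0,\dots,c_i}\rangle$, the RC-closure of an initial segment. Since joins, meets and relative complements of elements of the interval $[c_0,c_i]$ stay inside $[c_0,c_i]$ — a relative complement of $x$ in $[u,v]$ always lies in $[u,v]$ — one gets $T_i\subseteq[c_0,c_i]$, hence $\len{T_i}\le\len{[c_0,c_i]}$. A short extremality argument gives $\len{[c_0,c_i]}=i$: any chain from $c_0$ to $c_i$ of length $k$ concatenates with the tail $c_i\prec\dots\prec c_N$ to a chain of $L$ of length $k+(N-i)\le N$, so $k\le i$, while $c_0\prec\dots\prec c_i$ yields the reverse inequality. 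Thus $\len{T_i}=i$, the $T_i$ are pairwise distinct and nested, and $\emptyset\subsetneq T_0\subsetneq\dots\subsetneq T_N$ is a chain of length $N+1$ in $\RCSub L$, giving $\len{\RCSub L}\ge 1+\len L$.

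The upper bound rests on the lemma that if $S\subsetneq S'$ are nonempty RC-closed sublattices then $\len S<\len{S'}$. To prove it I would argue by contradiction, assuming $\len S=\len{S'}=\ell$. Fix a chain $D\colon d_0<\dots<d_\ell$ in $S$ of the maximum length $\ell$; being of maximum length it is maximal in $S$ and, as $\len{S'}=\ell$ as well, also maximal in $S'$, whence $d_0=0_{S'}$, $d_\ell=1_{S'}$ and the links $d_{j-1}\prec d_j$ are coverings in $S'$. Pick $z$ minimal in $S'\setminus S$; then $d_0\le z\le d_\ell$ and $z\neq d_0$, so there is a least index $j$ with $z\le d_j$, and $z\not\le d_{j-1}$. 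Now $d_{j-1}<d_{j-1}\vee z\le d_j$ with $d_{j-1}\vee z\in S'$ forces $d_{j-1}\vee z=d_j$ because $d_{j-1}\prec d_j$ in $S'$; and $w:=z\wedge d_{j-1}\in S'$ satisfies $w<z$, so $w\in S$ by the minimal choice of $z$. Consequently $z\in\rcp L(w,d_{j-1},d_j)$ with $w,d_{j-1},d_j\in S$, and the RC-closedness of $S$ yields the contradiction $z\in S$.

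Granting the lemma, the upper bound is immediate: in any chain of $\RCSub L$ the nonempty members have strictly increasing lengths lying in $\set{0,1,\dots,N}$, so at most $N+1$ of them occur, and adding the possible bottom element $\emptyset$ bounds the chain's length by $N+1$. Hence $\len{\RCSub L}\le 1+\len L$, and together with the lower bound this proves $\len{\RCSub L}=1+\len L$. I expect the monotonicity lemma to be the only real obstacle — specifically the choice of the minimal $z$ together with the covering $d_{j-1}\prec d_j$ in $S'$, which is exactly what manufactures a relative complement all three of whose parameters already lie in $S$; the rest is elementary chain counting and needs no modularity.
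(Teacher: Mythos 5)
Your proof is correct, and while its skeleton matches the paper's (an explicit chain of RC-closed sublattices for the lower bound, strict monotonicity of length along chains of $\RCSub L$ for the upper bound), the key lemma is obtained by a genuinely different argument. For the lower bound the paper simply takes the principal ideals $\ideal{c_i}$ along a maximal chain of $L$, together with $\emptyset$; your $T_i=\prcg L{\set{c_0,\dots,c_i}}$ squeezed into the interval $[c_0,c_i]$ is a slightly more roundabout version of the same idea (both rest on the observation that any relative complement in $\rcp L(u,x,v)$ lies in $[u,v]$). The real divergence is in the monotonicity lemma. The paper derives it from its Lemma~\ref{lemma:rcgen}, which is more general in that the inner object $X$ need only be a sublattice of the RC-closed $Y$ (a generality the paper needs later, in Theorems~\ref{thmmainb} and~\ref{thmmainc}); the paper's proof fixes a maximal chain $K$ of $S:=\prcg L X$ and chooses $b\in Y\setminus S$ minimizing the length of the gap interval $[\lcov b,\ucov b]$ along $K$. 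You instead choose an order-minimal $z\in S'\setminus S$ and a maximum-length chain $D$ of $S$, which by the equal-length hypothesis is a maximal chain of $S'$; the covering $d_{j-1}\prec d_j$ in $S'$ and the minimality of $z$ then directly produce the relative complement data $z\in\rcp L(z\wedge d_{j-1},\,d_{j-1},\,d_j)$ with all three parameters in $S$, forcing the contradiction $z\in S$. This is shorter and cleaner than the paper's gap-minimization. Two further observations in your favor: first, although your lemma is stated only for a pair of RC-closed sublattices, it actually recovers Lemma~\ref{lemma:rcgen} for free, since for $X\subseteq Y$ with $\len X=\len Y$ the sandwich $X\subseteq\prcg L X\subseteq Y$ forces $\len{\prcg L X}=\len Y$, and your lemma applied to the RC-closed pair $\prcg L X\subseteq Y$ yields $\prcg L X=Y$; second, your argument never uses RC-closedness of the larger sublattice $S'$ (only that it is a sublattice of finite length), so it proves a statement whose generality is complementary to the paper's, with the RC-closedness hypothesis placed on the smaller member instead of the larger one.
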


\begin{theorem}\label{thmmainb}
If $L$ is modular lattice such that $\RCSub L$ is a ranked lattice of finite length, then $L$ is a $2$-distributive lattice of finite length.
\end{theorem}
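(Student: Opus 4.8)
The plan is to argue by contraposition in two essentially independent steps, drawing the two conclusions (finite length and $2$-distributivity) from the two hypotheses separately; modularity enters only in the second step. First I would show that if $\RCSub L$ has finite length then so does $L$, using no modularity. Given a finite chain $c_0<c_1<\dots<c_n$ in $L$, put $T_i:=\rcg{\set{c_0,\dots,c_i}}$ and $T_{-1}:=\emptyset$. The point is that $T_{i-1}\subseteq\ideal{c_{i-1}}$: the generators of $T_{i-1}$ lie in the ideal $\ideal{c_{i-1}}$, which is a sublattice that is closed under the formation of relative complements, since $u,x,v\le c_{i-1}$ and $y\in\rcp L(u,x,v)$ force $y\le x\vee y=v\le c_{i-1}$. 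Hence the whole RC-closure stays inside $\ideal{c_{i-1}}$, and as $c_i\not\le c_{i-1}$ we get $c_i\notin T_{i-1}$, so that $T_{-1}\subsetneq T_0\subsetneq\dots\subsetneq T_n$ is a chain of length $n+1$ in $\RCSub L$. Thus $n+1\le\len{\RCSub L}$ for every finite chain of $L$; since $\len{\RCSub L}$ is finite, the chains of $L$ have bounded length and $L$ has finite length (this also recovers the inequality $\len{\RCSub L}\ge 1+\len L$ of Theorem \ref{thmmaina}).

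For $2$-distributivity I would argue the contrapositive: assuming $L$ modular, of finite length, and \emph{not} $2$-distributive, I would exhibit one principal ideal of $\RCSub L$ carrying two maximal chains of different lengths. Every lattice of length at most $2$ is $2$-distributive, and a modular lattice of length $3$ is the subspace lattice of a (possibly degenerate) projective plane: by submodularity of height, two distinct atoms join to a common cover and two distinct coatoms meet in a common atom, which are exactly the incidence axioms. So, localizing a failing instance of $2$-distributivity to the interval it generates and passing to a non-$2$-distributive interval of minimal length, Huhn's results \cite{huhn71,huhn72} together with this length-$3$ structure should yield an interval $[u,v]$ of $L$ that is the subspace lattice of a \emph{non-degenerate} projective plane, in particular with $\len{[u,v]}=3$. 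Two further observations then drive the argument: (a) for an interval $[u,v]$, relative complements in $L$ of elements of $[u,v]$ stay in $[u,v]$, so RC-closedness computed in $[u,v]$ agrees with that in $L$, and the principal ideal $\ideal{[u,v]}$ of $\RCSub L$ coincides with $\RCSub{[u,v]}$; and (b) the subspace lattice of a non-degenerate projective plane is \emph{RC-simple}, i.e. its only RC-closed sublattices containing both $u$ and $v$ are $\set{u,v}$ and $[u,v]$. Granting (b), $\set{u,v}\prec[u,v]$ is a cover, so $\emptyset\prec\set{u}\prec\set{u,v}\prec[u,v]$ is a maximal chain of $\ideal{[u,v]}$ of length $3$; but by (a) and Theorem \ref{thmmaina}, $\len{\ideal{[u,v]}}=\len{\RCSub{[u,v]}}=1+3=4$, so $\ideal{[u,v]}$ also carries a maximal chain of length $4$. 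Two maximal chains of different lengths in one principal ideal contradict rankedness, finishing the contrapositive.

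I expect the verification of (b) to carry the real content, and the extraction of the projective-plane interval to be the main obstacle. For (b): if $R$ is RC-closed with $u,v\in R$ and $R\neq\set{u,v}$, then $R$ contains an atom $p$ or a coatom $\ell$; the set $\rcp L(u,p,v)$ of lines missing $p$, respectively $\rcp L(u,\ell,v)$ of points off $\ell$, then lies in $R$, and the incidence geometry — any two points on a unique line, any two lines meeting in a unique point — propagates these to all points and all lines, forcing $R=[u,v]$; this needs the non-degeneracy to guarantee enough lines avoiding a given point. The harder part is the clean extraction: pinning down that a minimal non-$2$-distributive interval has length exactly $3$ rather than higher (a genuine higher projective geometry contains length-$3$ projective-plane subintervals, hence is not minimal) and is non-degenerate, uniformly over all coordinatizing planes, including non-Desarguesian ones. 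This is where I would lean hardest on Huhn's characterization and on the length-$3$ structure theorem, while keeping the RC-closure bookkeeping of (a) and (b) inside the generation lemma (Lemma \ref{lemma:rcgen}).
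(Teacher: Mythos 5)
Your overall architecture tracks the paper's quite closely: reduce to a length-3 non-2-distributive interval, use projective-plane geometry to obtain the covering $\set{u,v}\prec[u,v]$ in $\RCSub L$, and contradict rankedness via Theorem \ref{thmmaina}. Your first step (finite length of $L$) is correct, your observation (a) is correct, and your localized endgame --- exhibiting maximal chains of lengths $3$ and $4$ inside the single principal ideal $\ideal{[u,v]}=\RCSub{[u,v]}$ --- is a clean, arguably slicker, way to finish than the paper's (which extends its covering chain to a maximal chain of all of $\RCSub L$ and compares its length with $1+\len L$). The genuine gap is the extraction step, which you yourself flag as the main obstacle: you need an interval $[u,v]$ of $L$ that \emph{is} the subspace lattice of a nondegenerate projective plane, and the results you cite do not deliver this. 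Huhn's characterization \eqref{pbx:huhn} yields a plane subspace lattice only as a \emph{sublattice} of a suitable interval, not as the interval itself; a non-2-distributive modular lattice of length 3 is not a priori atomistic, so it is not a priori a plane lattice (your ``incidence axioms from submodularity'' remark does not rule out, say, join-irreducible elements of height 2 --- excluding them requires an argument that uses the plane sublattice). Moreover, your claim that a \emph{minimal-length} non-2-distributive interval has length exactly $3$ does not follow from minimality alone; that is exactly what the Herrmann--Huhn frame reduction \eqref{pbx:hrhnrdFr}, applied to the 3-frame provided by \eqref{pbx:huhnndfr}, is for, and you never invoke it.

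This gap is not cosmetic, because your proof of (b) is pure incidence geometry and only works if the interval coincides with the plane. The paper shows how to avoid needing that: keep the plane lattice as a cover-preserving sublattice $S$ of the length-3 interval $I=[0_F,1_F]$ obtained by frame reduction, and prove $\set{0_I,1_I}\prec I$ in $\RCSub L$ without knowing whether $I=S$. Concretely, if $X$ is RC-closed with $\set{0_I,1_I}\subset X\subseteq I$, pick $u\in X$ of height 1 or 2; the coatoms (resp.\ atoms) of $S$ avoiding $u$ are relative complements of $u$ in $[0_I,1_I]$, hence lie in $X$, and two of them generate a chain of length $3$ in $X$; then Lemma \ref{lemma:rcgen} forces $X=\prcg L X=I$. (Your incidence-propagation argument for genuine planes is correct, but it is this weaker ``reach length 3, then apply Lemma \ref{lemma:rcgen}'' version that is actually provable from what the cited results give.) With your (b) replaced by this argument, and with \eqref{pbx:hrhnrdFr} invoked for the reduction to length 3, your proposal becomes a complete proof; as written, the plane-interval extraction is assumed rather than proved.
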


\begin{theorem}\label{thmmainc}  
For every finite distributive lattice $L$,  $\RCSub L$ is a ranked lattice.
\end{theorem}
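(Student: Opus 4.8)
The plan is to reduce rankedness to a single statement about covers, the engine being Theorem~\ref{thmmaina}. The first observation I would record is that the principal ideals of $\RCSub L$ are again lattices of the same kind. Fix $S\in\RCSub L$ and let $U\subseteq S$. Since $S$ is RC-closed in $L$, for all $u,x,v\in U\subseteq S$ we have $\rcp{L}(u,x,v)\subseteq S$, so $\rcp{S}(u,x,v)=\rcp{L}(u,x,v)\cap S=\rcp{L}(u,x,v)$; hence $U$ is RC-closed in $L$ if and only if it is RC-closed in $S$. Therefore the principal ideal $\ideal{S}$ of $\RCSub L$ equals $\RCSub S$ as a lattice, and Theorem~\ref{thmmaina} applied to the finite (distributive) lattice $S$ gives $\len{\ideal{S}}=1+\len S$. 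Thus the \emph{maximum} length of a chain from $\emptyset$ to $S$ is $1+\len S$, and rankedness is exactly the assertion that \emph{every} maximal chain from $\emptyset$ to $S$ attains this value.

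Next I would set up the reduction arithmetic. The atoms of $\RCSub L$ are precisely the singletons, since every nonempty RC-closed sublattice contains a one-element (hence RC-closed) sublattice. Consider a maximal chain $\emptyset=T_0\prec T_1\prec\dots\prec T_k=S$ in $\ideal{S}$; here $T_1$ is a singleton, so $\len{T_1}=0$. If each nonempty cover raises the length by at most one, i.e.\ $\len{T_{i+1}}\le\len{T_i}+1$ for $1\le i<k$, then telescoping gives $\len S=\len{T_k}\le k-1$, so $k\ge 1+\len S$; combined with the upper bound $k\le 1+\len S$ from the first paragraph, every maximal chain has length exactly $1+\len S$, which is rankedness. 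So the whole theorem comes down to the following \textbf{Key Lemma}: if $T\prec T'$ is a cover in $\RCSub L$ with $T\ne\emptyset$, then $\len{T'}\le\len T+1$.

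To prove the Key Lemma I would fix $a\in T'\setminus T$. Since the least RC-closed sublattice containing $T\cup\{a\}$ lies strictly above $T$ and below $T'$, the cover forces it to equal $T'$; thus $T'$ is the RC-closure of the ordinary sublattice $M:=\langle T\cup\{a\}\rangle$. I would then bound the length in two stages. First, $\len M\le\len T+1$: in a distributive lattice every element of $M$ has the normal form $(a\wedge u)\vee v$ or $(a\vee u)\wedge v$ with $u,v\in T$, and adjoining a single generator to a sublattice of a distributive lattice raises the length by at most one. Second, and this is the delicate point, passing from $M$ to its RC-closure does not raise the length at all. Here distributivity is decisive: each relative complement $y\in\rcp{L}(u,x,v)$ is \emph{unique}, it lies in the interval $[u,v]$ whose endpoints are already in $M$, and it satisfies $x\wedge y=u$, $x\vee y=v$ with $x,y$ incomparable; so it is adjoined ``parallel'' to the already-present $x$ and should occupy no new rank.

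The main obstacle is this second stage, because a single cover can genuinely enlarge the sublattice by several elements while adding only one rank --- already in $L=2^2$ the cover $\set{0,a}\prec\set{0,a,b,1}$ adjoins two elements yet raises the length only from $1$ to $2$, so a cardinality bookkeeping is hopeless and the length must be controlled directly. To do this I would pass to the Birkhoff representation $L\cong\mathord{\downarrow}\textsf{-sets of }P$, where $P$ is the poset of join-irreducibles, the length function is the valuation counting the points of $P$, and, for down-sets $u\subseteq x\subseteq v$, a relative complement of $x$ in $[u,v]$ exists exactly when $v\setminus u$ splits into the two cross-incomparable blocks $x\setminus u$ and $v\setminus x$, the complement then being $u\cup(v\setminus x)$. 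In this language one checks that the forced complements all sit over the same ``new'' block and hence cannot lengthen a maximal chain, yielding $\len{T'}=\len M\le\len T+1$. I expect this combinatorial bookkeeping to be the only real work, and uniqueness of relative complements (i.e.\ distributivity) to be exactly what makes it succeed, in accordance with Theorem~\ref{thmmainb}: in the merely modular, non-$2$-distributive case the proliferation of relative complements is precisely what would destroy rankedness.
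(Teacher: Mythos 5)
Your reduction in the first two paragraphs is correct (principal ideals of $\RCSub L$ are of the form $\RCSub S$, Theorem~\ref{thmmaina} gives the upper bound $1+\len S$, and rankedness follows once every cover with nonempty bottom raises length by at most one), and your Key Lemma is indeed the heart of the matter: it is the ``$\Rightarrow$'' half of the cover characterization that the paper proves. But your proof of the Key Lemma breaks at its first stage. The principle ``adjoining a single generator to a sublattice of a distributive lattice raises the length by at most one'' is \emph{false}, even for RC-closed sublattices. In the Boolean lattice $B_4$ of all subsets of $\set{1,2,3,4}$, take $T=\set{\emptyset,\set{1,2},\set{3,4},\set{1,2,3,4}}$, an RC-closed sublattice with $\len T=2$, and take $a=\set{1,3}$. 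Then $M=\langle T\cup\set{a}\rangle$ contains $\set{1}=\set{1,2}\wedge a$ and $\set{1,2,3}=\set{1,2}\vee a$, hence the chain $\emptyset\subset\set{1}\subset\set{1,3}\subset\set{1,2,3}\subset\set{1,2,3,4}$, so $\len M=4=\len T+2$: one new generator can split every gap of a maximal chain of $T$, roughly doubling the length. (In this example $T\prec\prcg L M$ of course fails; but that is precisely the problem: under your cover hypothesis, and granting your second stage, the inequality $\len M\le\len T+1$ is \emph{equivalent} to the Key Lemma itself, so it must be proved, not imported as a general fact --- and as a general fact it is false.)

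Your second stage --- that RC-closure never increases length in a finite distributive lattice --- is true, but it is the real theorem here, and the proposal leaves it as a heuristic. The difficulty is that RC-closure is iterative: each adjoined complement creates new triples $(u,x,v)$ whose complements must in turn be adjoined, and a local remark that one complement sits ``parallel'' to $x$ does not bound the length of the limit of this process. The paper resolves both difficulties at once by choosing the generating set more carefully and by exhibiting an explicit RC-closed envelope. Instead of $T\cup\set{a}$, it generates with a chain: a maximal chain $C_0$ of $T$ is extended by one element of $T'$ to a chain $C$ with $\len C=\len T+1$ (so your problematic Stage 1 becomes vacuous), and then $T=\prcg L {C_0}\subseteq \prcg L C\subseteq T'$ together with the cover forces $\prcg L C=T'$. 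For the length bound, it embeds $L$ into a finite Boolean lattice $D$ (Birkhoff), forms the relative complements $b_i\in\rcp D(c_0,c_{i-1},c_i)$ along $C=\set{c_0<\dots<c_{k+1}}$, shows they are independent over $c_0$ and hence generate a Boolean sublattice $E$ of $D$ with $\len E=k+1$ that is RC-closed in $D$ and contains $C$; consequently $\prcg L C\subseteq \prcg D C\subseteq E$, giving $\len{T'}\le k+1=\len T+1$. Your intended down-set bookkeeping, carried out rigorously, is exactly this construction (your ``blocks'' are the $b_i$); without it, both stages of your Key Lemma are unproved, and the first is false as stated.
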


For a lattice $L$ and $X\subseteq L$, $\prcg L X$ stands for the least RC-closed sublattice of $L$ that includes $X$ as a subset; ``g'' in the acronym comes from ``generated''.

\begin{lemma}\label{lemma:rcgen}
If $L$ is a lattice of finite length,  $Y$ is an RC-closed sublattice of $L$, $X$ is a sublattice of\, $Y$, and 
$\len X=\len Y$, then $Y=\prcg L X$.
\end{lemma}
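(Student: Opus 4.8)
The plan is to argue by induction on $n=\len Y$, with $L$ fixed. The inclusion $\prcg L X\subseteq Y$ is immediate, since $Y$ is an RC-closed sublattice containing $X$ and hence contains the least such sublattice; write $Z=\prcg L X$, so that $X\subseteq Z\subseteq Y$. For the reverse inclusion I first read off the structural content of the hypothesis $\len X=\len Y$. A lattice of finite length is bounded, so $Y$ has a least element $0$ and a greatest element $1$; fix a chain $c_0<c_1<\dots<c_n$ of length $n$ inside $X$. Any proper end-extension or refinement of this chain within $Y$ would produce a chain longer than $\len Y=n$, so necessarily $c_0=0$, $c_n=1$, and every $c_i\prec c_{i+1}$ is a covering in $Y$. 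In particular $c:=c_{n-1}$ is a coatom of $Y$ that belongs to $X$.

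For the inductive step I descend to the principal ideal $Y'=[0,c]_Y$. It is a sublattice of $Y$ of length exactly $n-1$: it contains the chain $c_0\prec\dots\prec c_{n-1}$, and a longer chain in $Y'$ adjoined with $1$ would contradict $\len Y=n$. The essential point is that $Y'$ is again \emph{RC-closed in $L$}: if $u,x,v\in Y'$ and $y\in\rcp L(u,x,v)$, then $y\in Y$ because $Y$ is RC-closed, and $y\le x\vee y=v\le c$ forces $y\in Y'$. Setting $X'=X\cap Y'$, a sublattice of $Y'$ still containing the length-$(n-1)$ chain $c_0\prec\dots\prec c_{n-1}$, I get $\len X'=\len Y'=n-1$, so the induction hypothesis gives $Y'=\prcg L{X'}$. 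As $X'\subseteq X\subseteq Z$ and $Z$ is RC-closed, $\prcg L{X'}\subseteq Z$, whence $[0,c]_Y=Y'\subseteq Z$. (The base case $n=0$ is trivial, since then $Y=X=\set 0$.)

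It remains to place inside $Z$ those elements of $Y$ that lie outside the ideal $[0,c]_Y$, and this single step is where relative complements, rather than induction, do the work. Let $y\in Y$ with $y\not\le c$. Since $c$ is a coatom of $Y$, we have $c\vee y=1$, while $u:=c\wedge y\le c$ lies in $Y'\subseteq Z$. Thus $c\wedge y=u$ and $c\vee y=1$ witness $y\in\rcp L(u,c,1)$ with $u,c,1\in Z$; RC-closedness of $Z$ then yields $y\in Z$. Combined with $[0,c]_Y\subseteq Z$, this gives $Y\subseteq Z$ and therefore $Y=Z=\prcg L X$.

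I expect the only real care to be needed in the bookkeeping: confirming that the ideal $[0,c]_Y$ is itself RC-closed, so that the induction hypothesis is legitimately applicable, and that a coatom $c$ can be chosen inside $X$. The closing relative-complement computation is brief, but it is the conceptual crux, for it is exactly there that RC-closedness of $Z$ is converted into membership of the elements $y\not\le c$. Notably no use is made of modularity, in line with the strengthened form of the statement.
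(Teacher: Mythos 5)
Your proof is correct, and it is organized genuinely differently from the paper's. The paper argues by contradiction with a minimal counterexample: it first shows $\len{S}=\len Y$ for $S:=\prcg L X$, picks a maximal chain $K$ of full length inside $S$ (not inside $X$), and for an element $b\in Y\setminus S$ minimizing the length of the chain interval $[\lcov b,\ucov b]$ it produces a middle chain element $q$ and recovers $b$ as a member of $\rcp L(b\wedge q,q,b\vee q)$, where \emph{both} memberships $b\wedge q\in S$ and $b\vee q\in S$ require the minimality argument. You instead induct on $\len Y$: the hypothesis $\len X=\len Y$ hands you a full-length chain in $X$ itself, whose penultimate element $c$ is a coatom of $Y$ lying in $X$; you verify that the ideal $\lideal Y c$ is RC-closed (the same observation the paper makes only later, in the proof of Theorem 2, that intervals are RC-closed), apply the induction hypothesis to it, and then capture each $y\in Y$ with $y\not\leq c$ as a relative complement in $\rcp L(c\wedge y, c, 1_Y)$. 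A pleasant feature of your setup is that only the meet side $c\wedge y\in\prcg L X$ needs an argument, since the join side $c\vee y=1_Y$ is automatic from coatom-ness; this eliminates the paper's $\lcov{\phantom{b}}/\ucov{\phantom{b}}$ bookkeeping and its minimality argument entirely. What the paper's one-pass argument buys in exchange is that it never needs to descend to substructures or set up an induction, and it works directly from any full-length chain in the generated sublattice. Like the paper, you use no modularity, so your argument is consistent with the strengthened (non-modular) form of the lemma.
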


For the particular case where  $L$ is distributive,
Lemma \ref{lemma:rcgen} could be  extracted from Section 10 of Gr\"atzer \cite{gg1st}. Letting $Y:=L$, the lemma trivially implies the following.

\begin{corollary}\label{cor:nhhNskpS}
If $X$ is a sublattice of a lattice $L$ of finite length such that $\len X=\len L$, then $L$ is RC-generated by $X$.
\end{corollary}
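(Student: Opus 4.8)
The plan is to obtain the corollary as an immediate specialization of Lemma~\ref{lemma:rcgen}, taking $Y := L$. First I would observe that $L$ is itself an RC-closed sublattice of $L$: it is trivially a sublattice of itself, and for any $u, x, v \in L$ the defining inclusion $\rcp L(u,x,v) \subseteq L$ holds automatically, since by definition every element of $\rcp L(u,x,v)$ already lies in $L$. Thus $L \in \RCSub L$, and so $L$ is an admissible choice for the ``$Y$'' appearing in the lemma.

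Next I would verify that, with this choice $Y := L$, all four hypotheses of Lemma~\ref{lemma:rcgen} are met. The lattice $L$ has finite length by the assumption of the corollary; $Y = L$ is RC-closed by the previous paragraph; $X$ is a sublattice of $Y = L$ by hypothesis; and the length condition $\len X = \len Y$ of the lemma becomes $\len X = \len L$, which is exactly what the corollary assumes. The lemma therefore applies and yields $Y = \prcg L X$, that is, $L = \prcg L X$, which is precisely the assertion that $L$ is RC-generated by $X$.

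There is essentially no obstacle here: the statement is a one-line consequence of Lemma~\ref{lemma:rcgen}, and the only point worth recording explicitly is the (trivial) observation that $L$ is its own RC-closed sublattice. All of the genuine difficulty lives in the proof of Lemma~\ref{lemma:rcgen} itself, so I would not attempt to prove the corollary by any route independent of that lemma.
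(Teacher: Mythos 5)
Your proposal is correct and is exactly the paper's argument: the paper also obtains the corollary by setting $Y:=L$ in Lemma~\ref{lemma:rcgen}, noting that this choice trivially satisfies all the hypotheses. Spelling out that $L$ is an RC-closed sublattice of itself is a harmless (and accurate) elaboration of what the paper leaves implicit.
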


\begin{proof}[Proof of Lemma \ref{lemma:rcgen}] 
We give a proof by contradiction. With $S:=\prcg L X$, suppose that $S\neq Y$. Since  $X\subseteq S\subset Y$,  $\len Y=\len X\leq \len S\leq \len Y$ gives that $\len S=\len Y$.  This allows us to pick a maximal chain $K$ in $S$ such that $\len K=\len Y$.
We have that  $0_K=0_S=0_Y$, $1_K=1_S=1_Y$, and both $K$ and $S$  are \emph{cover-preserving sublattices} of $Y$, that is,  for any $x,y\in S$, $x\prec_S y\iff x\prec_Y y$, and similarly for $K$. For $a\in L$, the principal ideal and the principal filter generated by $a$ are denoted by $\lideal L a:=\set{x\in L: x\leq a}$ and $
\lfilter L a:=\set{x\in L: x\geq a}$, respectively. 
We write $\ideal a$ and $\filter a$ if $L$ is understood.
If $u\leq v$ in $Y$, then the \emph{length} of the interval $[u,v]$, understood in $Y$, will be denoted by $\llen Y([u,v])$.
For, say, $u\leq v\in K$, the notation 
 $\llen K([u,v])$ is analogously defined. For $x\in Y$,  we let
\[\lcov x:=\bigvee (K\cap \lideal Y x) \quad\text{ and }\quad
\ucov x:=\bigwedge (K\cap \lfilter Y x).
\]
Since $K$ is finite and so it is a complete sublattice, $\lcov x,\ucov x\in K$ for every $x\in Y$. Choose an element $b\in Y\setminus S$ such that $d:=\llen K([\lcov b, \ucov b])=\llen Y([\lcov b, \ucov b])$ is minimal. Since $K\ni \lcov b\leq b \leq \ucov b\in K$
but $b\notin K$, $d\geq 2$. Hence, we can pick an element $q\in K$ such that $\lcov b<q<\ucov b$. Then $z:=b\wedge q\in Y$. 
If  $z$ was outside $S$, then $K\ni\lcov b\leq z\leq q\in K$
would give that $\lcov b\leq \lcov z\leq \ucov z\leq q<\ucov b$ and 
$\llen K([\lcov z, \ucov z]) <  \llen K([\lcov b, \ucov b])=d$ would be a contradiction. Thus, $b\wedge q=z\in S$. Dually, $b\vee q\in S$ and, of course, $q\in K\subseteq S$. Thus, $b\in \rcp L(b\wedge q,q,b\vee q)\in S$, contradicting that $b\in Y\setminus S$. 
\end{proof}

In the following three proofs, we can assume that  $n:=\len L$ is at least 2. As always in Lattice Theory, ``$\subset$'' is the conjunction of ``$\subseteq$'' and ``$\neq$''.

\begin{proof}[Proof of Theorem \ref{thmmaina}]
Let $0=c_0\prec c_1\dots\prec c_n=1$ be a chain in $L$. Let $X_{-1}:=\emptyset$ and, for $i\in\set{0,\dots,n}$, $X_i:=\ideal{c_i}$. Since all these $X_i$ belong to $\RCSub L$ and 
$X_{-1}\subset X_0\subset X_1\subset \dots\subset X_n=L$, 
 $\len {\RCSub L}\geq 1+n=1+\len L$. 
Next, let $Y_{-1}\subset Y_0\subset\dots \subset Y_k=L$ be an arbitrary chain in $\RCSub L$. Clearly,  $\len{Y_{i-1}}\leq \len{Y_{i}}$ for 
all $i\in\set{0,1,\dots, k}$. We claim that 
$\len{Y_{i-1}} < \len{Y_{i}}$ for all meaningful $i$.
Suppose the contrary. Then $Y_{i-1}\subset Y_i$ and $\len{Y_{i-1}} = \len{Y_{i}}$ for some $i$. With reference to Lemma~\ref{lemma:rcgen} at ``$=^\ast$'', $Y_{i-1}=\prcg L {Y_{i-1}} =^\ast Y_i$, which is a contradiction proving that 
$\len{Y_{i-1}} < \len{Y_{i}}$ for all meaningful $i$.
Therefore, since $-1\leq \len\emptyset\leq \len {Y_{-1}}$, we obtain that 
$k\leq 1+n$. Thus,  $\len {\RCSub L}\leq 1+n=1+\len L$, completing the proof.
\end{proof}

\begin{proof}[Proof of Theorem \ref{thmmainb}]
Following von Neumann \cite{neumann} and going also after Herrmann \cite{herrmannarith}, a system $F=(a_i, c_{i,j}: i,j\in\set{1,\dots,n}, i\neq j)$ of elements of a modular lattice $L$ is a \emph{non-trivial normalized (von Neumann) $n$-frame} or, briefly, a \emph{von Neumann $n$-frame} if, with the notation $0_F:=\bigwedge_{i=1}^n a_i$ and  $1_F:=\bigvee_{i=1}^n a_i$,  we have that $0_F\neq 1_F$, 
$a_j\wedge\bigvee_{t\neq j}a_t=0_F=a_i\wedge c_{i,j}$, $c_{i,j}=c_{j,i}$, $a_i\vee c_{i,j}=a_i\vee a_j$, and $c_{i,k}=(a_i\vee a_k)\wedge(c_{i,j}\vee c_{j,k})$ for all $\set{i,j,k}\subseteq\set {1,\dots,n}$  with $|\set{i,j,k}|=3$.
Here $2\leq n\in\Nplu:=\set{1,2,3,\dots}$. 
We know from  Huhn \cite[Proposition 1.2]{huhnsmqsts} that 
\begin{equation}
\parbox{8.8cm}{for $n\in\Nplu$, a modular lattice is $n$-distributive if and only if it does not contain a von Neumann $(n+1)$-frame.}
\label{pbx:huhnndfr}
\end{equation}
Subsection 1.4 "Reduction of frames" together with Subsection 1.7 of Herrmann and Huhn \cite{herrmannhuhnNSub} prove that, for $n\in\Nplu$, 
\begin{equation}\label{pbx:hrhnrdFr}
\parbox{10.0cm}{if $F=(a_i, c_{i,j}: i,j\in\set{1,\dots,n}, i\neq j)$ is a von Neumann $n$-frame in a modular lattice $L$ and $a'_1\in L$ such that $0_F<a'_1 < a_1$, then $a_1'$ belongs to a von Neumann $n$-frame $F=(a'_i, c'_{i,j}: i,j\in\set{1,\dots,n}, i\neq j)$ such that $0_F=0_{F'}<a'_i < a_i$ for $i\in\set{1,\dots,n}$.} 
\end{equation}

We say that a projective space is \emph{irreducible} (AKA nondegenerate) if each of its lines contains at least three points. It is known (and easy to see) that each point of an irreducible projective plane lies on at least three lines. By Huhn \cite[Thm.\ 1.1]{huhn2n},
\begin{equation}\label{pbx:huhn}
\parbox{9.5cm}{
 for $n\in\Nplu$,
a modular algebraic lattice is $n$-distributive if and only if none of its sublattices is isomorphic to the subspace lattice of an irreducible $n$-dimensional projective geometry.}
\end{equation}

Assume that $L$ is a modular lattice such that $\RCSub L$ is a ranked lattice of finite length. 
Whenever $\dots <  c_i <  c_{i+1}<\dots$ is a (finite or infinite) chain in  $L$, then $\dots < \prcg L{\ideal c_i} < \prcg L{\ideal{c_{i+1}}}<\dots$ is a chain in $\RCSub L$. Thus, $L$ is  of finite length. 
For the sake of contradiction, suppose that  $L$ is not 2-distributive.  By \eqref{pbx:huhnndfr}, $L$ contains a von Neumann 3-frame $F=(a_i,c_{i,j}:i\neq j,\,\,i,j\in\set{1,\dots,3})$. Applying \eqref{pbx:hrhnrdFr}, repeatedly if necessary, we can assume that $0_F\prec a_i$, for $i\in\set{1,2,3}$.  The definition of a $3$-frame implies that $\set{a_1,a_2,a_3}$ is an independent set of atoms in the filter $\filter {0_F}$. Hence, $1_F=a_1\vee a_2\vee a_3$ is of height 3 in $\filter {0_F}$, that is, $\llen L([0_F,1_F])=3$. By \eqref{pbx:huhnndfr},  the interval $I:=[0_F,1_F]$ is not $2$-distributive. Using \eqref{pbx:huhn}, we obtain that the subspace lattice $S$ of a projective plane $G$ is a sublattice of $I$. Since $\len I=3=\len S$, $S$ is a cover-preserving sublattice of $I$ and $L$,  $0_S=0_F=0_I$,  and $1_s=1_I$.  
Let $\At S$ and $\Coat S$ denote the set of atoms and that of coatoms of $S$, respectively. The are disjoint sets, $S=\set{0_S,1_S}\cup\At S\cup\Coat S$, $\At S=\set{a\in I: 0_I\prec a,\,\, a\in S}$, and dually. A trivial geometric argument shows that for $\forall a\in \At S$ and $\forall b\in\Coat S$,
$1_S\overset{\ast}=\bigvee\bigl(\At S\setminus\set {a}\bigr)$ and $0_S\overset{\ast\ast}=\bigwedge\bigl(\Coat S\setminus\set{b}\bigr)$.
Define $Z_{-1}:=\emptyset$, $Z_0:=\set{0_S}$, $Z_1:=\set{0_S, 1_S}$, and $Z_2:=\prcg L S$.  We claim that
\begin{align}
Z_{-1},Z_0,Z_1,Z_2=I\in\RCSub L\text{ and }Z_{-1}\prec Z_0\prec Z_1\prec Z_2\text{ in }\RCSub L. \label{eq:spHkwKq}
\end{align}
By Lemma~\ref{lemma:rcgen}, $I=\prcg I S$. 
 Since $\prcg I S\subseteq \prcg L S$, we obtain that $I\subseteq \prcg L S=Z_2$. Since every interval is RC-closed, 
$S\subseteq I\in\RCSub L$ yields that  $Z_2=\prcg L S\subseteq \prcg L I=I$. Thus, $Z_2=I$. Trivially, $Z_{-1}\prec Z_0\prec Z_1$ and $Z_1\subset Z_2$.  To verify that $Z_1\prec Z_2=I$,
assume that $Z_1\subset X\subseteq I$ for some $X\in\RCSub L$. 
Pick an element $u\in X\setminus Z_1$. Then $0_F=0_S < u < 1_S$.  Since $\len S=3$, either $u$ is of height 1, or it is of height 2. First, assume that $u$ is of height 2, that is, $u\prec 1_S$ in $I$ (and in $L$). If we had that $|\At S\setminus\ideal u|\leq 1$, then  ``$\overset{\ast}=$'' would give that $1_S=\bigvee \bigl(\At S\cap\ideal u\bigr)\leq u$, contradicting that $u<1_S$. Hence,  $|\At S\setminus\ideal u|\geq 2$, and we can pick two distinct elements, 
${v_1}$ and ${v_2}$, from  $\At S\setminus\ideal u$. For $s\in\set{1,2}$, using that ${v_s}\not\leq u$, $u,{v_s}\in [0_S,1_S]$, $0_S\prec {v_s}$, and $u\prec 1_S$ , we obtain that ${v_s}\in\rcp L(0_S, u, 1_S)$. Hence, ${v_s}\in X$.  
By modularity (in fact, by semimodularity), ${v_1}\prec {v_1}\vee {v_2}\in X$. Since $\len I=3$, $0_S=0_I \prec {v_1}\prec {v_1}\vee {v_2}< 1_S=1_I$. This chain, being in $X$, shows that $3\leq \len X$. On the other hand, $\len X\leq \len I=3$. 
Using Lemma \ref{lemma:rcgen} and that $X\in \RCSub L$, we have that $X=\prcg L X=I=Z_2$. Thus, 
$Z_1\prec Z_2$, completing the proof of \eqref{eq:spHkwKq}.

By \eqref{eq:spHkwKq}, $Z_{-1}\prec\dots\prec Z_2$ extends to a maximal chain $\vec Z: Z_{-1}\prec\dots\prec Z_k$ of $\RCSub L$. Like in the proof of Theorem~\ref{thmmaina}, Lemma~\ref{lemma:rcgen} applies and we obtain that $-1\leq \len{Z_{-1}} < \len{Z_{0}} < \dots < \len{Z_{k}}$. 
In fact, we know even that $\len{Z_{1}} +1 =2 < 3=\len{Z_{2}}$.
Thus, the maximal chain $\vec Z$ consists of at most $1+\len L$ elements, whence $\len{\vec Z}\leq \len L$. But $\RCSub L$ is a ranked lattice of finite length,  whereby $\len{\RCSub L}=\len{\vec Z}\leq \len L$,
contradicting  Theorem~\ref{thmmaina}. 
\end{proof}

\begin{proof}[Proof of Theorem \ref{thmmainc}]
Let $L$ be a finite distributive lattice. Since it is a ranked lattice of finite length,  it suffices to show that for any $U,V\in\RCSub L$,
\begin{equation}
U\prec V\text{ in }\RCSub L\,\,\iff\,\, \bigl(U\subset V\text{ and }\len V=\len U+1 \bigr).
\label{eq:msnClksh}
\end{equation}
To prove the $\Rightarrow$ direction, assume that $U\prec V$.
Clearly, $U\subset V$ and $\len U\leq \len V$. 
If we had that $\len U=\len V$, then Lemma~\ref{lemma:rcgen}  would give that $U=\prcg L U=V$, a contradiction. Hence, $\len U < \len V$. We are going to show by way of contradiction that $\len V=\len U+1$. Suppose the contrary; then $k:=\len U\leq \len V-2$. Take a maximal chain $C_0$ in $U$.
Since $\len {C_0}=k$ and $\len V\geq k+2$, we can extend $C_0$ to
a chain $C$ of $V$  such that $\len C=k+1$. 
Let $W:=\prcg L C$. It follows from Lemma \ref{lemma:rcgen} that 
$U=\prcg L {C_0}$. Hence, $U=\prcg L{C_0}\subseteq \prcg L C=W$.
Since $\len W\geq \len C>\len U$, $W\neq U$. Hence, $U\subset W$. 
The inclusion $C\subseteq V$ gives that $W=\prcg L C\subseteq  
\prcg L V=V$. Combining $U\subset W\subseteq V$ and $U\prec V$, we obtain that $W=V$.

Next, we  write $C$ in the form $C=\set{c_0<c_1<\dots<c_{k+1}}$. (Note that ``$<$'' here need not mean covering in $L$.) By Birkhoff \cite{birkhoff}, we can fix a finite Boolean lattice $D$ such that $L$ is a sublattice of $D$. We define the elements $b_i\in D$ for $i\in\set{1,\dots, k+1}$ by the rule $b_i\in \rcp D(c_0,c_{i-1},c_i)$. Since $D$ is a Boolean lattice, $b_i$ exists and it is uniquely determined. 
We claim that, for $i=2,3,\dots, k+1$,
\begin{equation}
c_0\notin\set{b_1,\dots,b_i},\text{ } c_{i-1}=b_1\vee\dots\vee b_{i-1},\text{ and }(b_1\vee\dots \vee b_{i-1})\wedge b_i=c_0.
\label{eq:kfbKszmD}
\end{equation} 
We show this by induction on $i$. Clearly, $b_1=c_1\neq c_0$.
Since $b_2\in\rcp D(c_0,c_1,c_2)$, we have that $b_2\neq c_0$ since otherwise $c_2=c_1\vee b_2=c_1$ would be a contradiction.
Also, $b_2\in\rcp D(c_0,c_1,c_2)$ gives that $b_1\wedge b_2=c_1\wedge b_2=c_0$. Hence,  
\eqref{eq:kfbKszmD} holds for $i=2$. Assume that $2\leq i<k+1$ and \eqref{eq:kfbKszmD} holds for this $i$.
As before,  $b_{i+1}\neq c_0$ since otherwise $b_{i+1}\in\rcp D(c_0,c_i,c_{i+1})$ would lead to $c_{i+1}=b_{i+1}\vee c_i=c_i$, which is a contradiction. Using the definition of $b_i$ and the
induction hypothesis, we have that
$c_i =c_{i-1}\vee b_i=  b_1\vee \dots \vee b_{i-1}\vee b_i$, that is, the second equality of \eqref{eq:kfbKszmD} holds for $i+1$. Using this equality and the definition of $b_{i+1}$, we have that 
$ (b_1\vee \dots\vee  b_i)\wedge b_{i+1}=c_i\wedge b_{i+1}=c_0$. This completes the induction step and proves that \eqref{eq:kfbKszmD} holds for $i=2,3,\dots,k+1$.

By, say, Gr\"atzer \cite[Theorem 360]{ggglt} and  \eqref{eq:kfbKszmD}, 
$\set{b_1,b_2,\dots,b_{k+1}}$ is a $(k+1)$-element independent set in the filter $\lfilter D {c_0}$. Thus, this set generates a $2^{k+1}$-element Boolean sublattice $E$. Using that any element in an interval of a distributive lattice has at most one relative complement with respect to the interval in question and $E$ as a Boolean lattice is closed with respect to taking relative complements, we obtain that $E$ is RC-closed. It is clear from 
\eqref{eq:kfbKszmD} and $b_{k+1}\in\rcp D(c_0,c_{k},c_{k+1})$ that 
$C\subseteq E$. Hence, $\prcg D C\subseteq \prcg D E=E$. 
Now let $F$ be a maximal chain in $W$. Since 
$W=\prcg L C\subseteq \prcg D C\subseteq E$,  we have that $F$ is a chain in $E$. But $\len E=k+1$, implying that $\len F\leq k+1$.
So $\len W=\len F\leq k+1$. On the other hand, $k+1= \len C\leq \len W$. Thus, $\len W=k+1$, which is a contradiction since $W=V$ and $\len V=k+2$. This proves the $\Rightarrow$ direction of \eqref{eq:msnClksh}.

To prove the $\Leftarrow$ direction, assume that $U\subset V$ and $\len V=\len U+1$. Assume also that $H\in \RCSub L$ such that $U\subseteq H\subseteq V$. Clearly, $\len U\leq \len H\leq \len V$, whence $\len H$ is $\len U$ or $\len V$.  
If $\len H=\len U$, then Lemma \ref{lemma:rcgen} gives that
$U=\prcg L U=H$. Similarly, if $\len H=\len V$, then the same lemma gives that $H=\prcg L H=V$. Therefore, $U\prec V$, completing the proof.
\end{proof}

Some facts are easy to observe; see Cz\'edli~\cite{CzGjnm} for details. For a lattice $L$, $\RCSub L$ is Boolean $\iff$ $\RCSub L$ is semimodular $\iff$ $\RCSub L$ is lower semimodular. Let $B_n$ stand for the $2^n$-element Boolean lattice. Our tools lead to a (bit complicated)  formula for $|\RCSub {B_{n}}|$, which allows us to compute (in $0.05$ seconds with computer algebra) that, say, $|\RCSub {B_{57}}|$ is the 59-digit number
\[
46\,669\,606\,977\,325\,325\,544\,440\,157\,525\,187\,321\,911\,338\,002\,625\,473\,546\,541\,556
\]
where $B_{57}$ is the Boolean lattice of length 57. It took 24 seconds to obtain that $|\RCSub {B_{999}}|\approx 0. 566 \,759\, 343 \, 075 \, 881 \, 648 \cdot 10^{1930}$.

\end{document}